\documentclass[11pt]{amsart}
\usepackage{amsmath, amsthm, amssymb, amsfonts,color,url,enumitem, caption, booktabs}
\usepackage[toc,page]{appendix}
\usepackage{cancel}
\usepackage{wrapfig,hyperref}
\usepackage{mathtools}
\usepackage{fullpage}
\usepackage{subfig}
\usepackage{color}
\usepackage[normalem]{ulem}
\usepackage[dvipsnames]{xcolor}

\setlength{\abovedisplayskip}{-30pt}
\setlength{\belowdisplayskip}{0pt}
\setlength{\abovedisplayshortskip}{0pt}
\setlength{\belowdisplayshortskip}{0pt}

\newtheorem{definition}{Definition}[section]
\newtheorem{theorem}{Theorem}[section]

\newtheorem{lemma}{Lemma}[section]

\newtheorem{proposition}{Proposition}[section]

\newcommand{\Z}{\mathbb{Z}}
\newcommand{\Q}{\mathbb{Q}}
\newcommand{\R}{\mathbb{R}}

\newcounter{main-constants}
\newcommand{\newconstant}[1]{\refstepcounter{main-constants}\label{#1}}
\newcounter{lemma-constants}
\newcommand{\newconstantlem}[1]{\refstepcounter{lemma-constants}\label{#1}}

\title{On the Diophantine equations of the form $\lambda_1U_{n_1} + \lambda_2U_{n_2} +\ldots + \lambda_kU_{n_k} = wp_1^{z_1}p_2^{z_2} \cdots p_s^{z_s}$}
\author{Eva Goedhart, Brian Ha, Lily McBeath, and Luisa Velasco}
\date{\today}
\keywords{Diophantine equations, recurrence sequences, linear forms in logarithms}
\subjclass{11B37, 11D61, 11D45, 11D72, 11J86}
\thanks{This work was partially supported by NSF Grant DMS1947438 and Williams College through the SMALL Research Experience for Undergraduates.}

\begin{document}
\maketitle

\begin{abstract}
    In this paper, we consider the Diophantine equation $\lambda_1U_{n_1}+\ldots+\lambda_kU_{n_k}=wp_1^{z_1} \cdots p_s^{z_s},$ where $\{U_n\}_{n\geq 0}$ is a fixed non-degenerate linear recurrence sequence of order greater than or equal to 2; $w$ is a fixed non-zero integer; $p_1,\dots,p_s$ are fixed, distinct prime numbers; $\lambda_1,\dots,\lambda_k$ are strictly positive integers; and $n_1,\dots,n_k,z_1,\dots,z_s$ are non-negative integer unknowns. We prove the existence of an effectively computable upper-bound on the solutions $(n_1,\dots,n_k,z_1,\dots,z_s)$. 
    In our proof, we use lower bounds for linear forms in logarithms, extending the work of Pink and Ziegler (2016), Mazumdar and Rout (2019), Meher and Rout (2017), and Ziegler (2019).
\end{abstract}
\section{Introduction}
There has been recent interest in bounding solutions of Diophantine equations consisting of terms of recurrence sequences set equal to a prime power. For instance, in 2014, Bravo and Luca \cite{bravo2016diophantine} resolved the following equation for all indices $n$ and $m$ and exponent $a$,
\[
    F_n + F_m = 2^a,
\]
where~$F_i$ is the $i$-th term in the Fibonacci sequence. Further, results of this type have been extended to involve terms of any sufficiently nice binary recurrence sequence. In 2018, Pink and Ziegler \cite{pink2016effective} effectively bounded the solutions to the following equation for fixed primes $p_1, \dots, p_s$ and nonzero integer constant $w$ and variables $n,m,z_1,\dots,z_s$,
\[
    u_n + u_m = wp_1^{z_1} \cdots p_s^{z_s},
\]
where~$u_n$ and~$u_m$ are binary recurrence terms. Moreover, in 2019, Mazumdar and Rout \cite{mazumdar2019prime} studied the equation $$u_{n_1} +\ldots + u_{n_k} = p^z,$$ where they extended the number of terms to an arbitrary finite sum equaling a prime power and achieved an effective finiteness result.

Additionally, in 2019, Ziegler \cite{ziegler2019effective} found effective finiteness results for the equation $$a_1U_{n_1}+\ldots+a_kU_{n_k}=b_1V_{m_1}+\ldots+b_\ell V_{m_\ell},$$ involving a more general class of sequences $\{U_n\}_{n\geq 0}$ and $\{V_m\}_{m\geq 0}$ with order greater than or equal to 2.

In this paper, we aim to generalize the results of Pink and Ziegler \cite{pink2016effective}, Mazumdar and Rout \cite{mazumdar2019prime}, Meher and Rout \cite{meher2017linear}, and Ziegler \cite{ziegler2019effective}, and analyze the integer solutions to the more general Diophantine equation given by 
\begin{align}\label{eq:main-equation-generalized}
\lambda_1U_{n_1} + \lambda_2U_{n_2} +\ldots + \lambda_kU_{n_k} &= wp_1^{z_1}p_2^{z_2} \cdots p_s^{z_s},
\end{align}
where $n_1, \dots, n_k, z_1, \dots, z_s$ are non-negative integers; $\{U_n\}_{n \geq 0}$ is a non-degenerate integer recurrence sequence of order $d\geq 2$; $\lambda_1,\dots,\lambda_k$ are nonzero integers; $p_1, \dots, p_s$ are distinct primes; and $w$ is a non-zero integer with $p_i \nmid w$ for $1 \leq i \leq s$.

In particular, the main result of this paper relies on what it means for our coefficients $\lambda_1,\dots,\lambda_k$ to \textit{admit dominance}, a condition defined by Ziegler \cite{ziegler2019effective}, which is as follows.
\begin{definition}[Ziegler, \cite{ziegler2019effective}]\label{def:admit-dominance}
For a given recurrence sequence $\{U_n\}_{n\geq 0}$, the $k$-tuple of non-zero integers $(\lambda_1,\dots,\lambda_k)$ \emph{admits dominance} if for every $n_1>\ldots>n_k\geq 0$ we have \newconstant{ziegler-const} $$|\lambda_1U_{n_1}+\ldots+\lambda_kU_{n_k}|>C |U_{n_1}|,$$ where $C$ is a positive, effectively computable constant that does not depend on $n_1,\dots,n_k$.
\end{definition}

Using the admits dominance condition, we now state the main result of this paper, the other terms of which are defined in the next section.
\begin{theorem}\label{thm:main-thm-generalized}
Let $\{U_n\}_{n\geq0}$ be a non-constant, simple, non-degenerate, linear recurrence sequence defined over the integers with a dominant root $\alpha$. Assume that the $k$-tuple of nonzero integers $(\lambda_1, \dots, \lambda_k)$ admits dominance for $\{U_n\}_{n\geq0}$. Then, there exists an effectively computable constant $\mathcal{C}$ such that every solution $(n_1, \dots, n_k, z_1, \dots, z_s)$ to equation \eqref{eq:main-equation-generalized} with $n_1 > \dots > n_k$ and $n_1 \geq 3$ satisfies
$$\max\{n_1, \dots, n_k, z_1, \dots, z_s\} \leq \mathcal{C}.$$
\end{theorem}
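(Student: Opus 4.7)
The plan is to reduce the theorem to two applications of Matveev's lower bound for linear forms in logarithms, using admits dominance both to set up the principal asymptotic and to rule out degenerate cancellations. First, I would write the Binet representation $U_n = a_1\alpha^n + \sum_{i=2}^d a_i \alpha_i^n$ and set $\rho := \max_{i\geq 2}|\alpha_i| < |\alpha|$, so $|U_n - a_1\alpha^n| \leq c_0\rho^n$. Inserting this into \eqref{eq:main-equation-generalized}, invoking admits dominance on one side and the trivial bound $|U_{n_j}| \leq c|\alpha|^{n_j} \leq c|\alpha|^{n_1}$ on the other, yields the effective two-sided estimate
\[
C_1 |\alpha|^{n_1} \;\leq\; p_1^{z_1}\cdots p_s^{z_s} \;\leq\; C_2 |\alpha|^{n_1}.
\]
Taking logarithms shows that $n_1$ and $Z := \max(z_1,\ldots,z_s)$ are linearly comparable, so $\log(n_1+Z) = O(\log n_1)$; consequently, any bound of the shape $n_1 \leq c \log n_1 + O(1)$ will suffice to finish the proof.

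To set up the principal linear form, I would isolate the dominant exponential by rearranging
\[
\lambda_1 a_1 \alpha^{n_1} - w\,p_1^{z_1}\cdots p_s^{z_s} = -\lambda_1\bigl(U_{n_1}-a_1\alpha^{n_1}\bigr) - \sum_{j=2}^k \lambda_j U_{n_j},
\]
whose absolute value is at most $c(\rho^{n_1} + |\alpha|^{n_2})$. Dividing by $\lambda_1 a_1\alpha^{n_1}$ produces an upper bound on
\[
\Lambda := n_1 \log\alpha - \sum_{i=1}^s z_i \log p_i + \log\bigl(w/(\lambda_1 a_1)\bigr),
\]
while Matveev's theorem supplies the matching lower bound $|\Lambda| > \exp(-\kappa \log(n_1+Z))$ with $\kappa$ effective and depending only on the heights and degrees of $\alpha$, the $p_i$, and $w/(\lambda_1 a_1)$. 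When $n_1 - n_2 \geq \eta n_1$ for a small positive $\eta$ (chosen so that both $\rho^{n_1}$ and $|\alpha|^{n_2}$ are $\ll |\alpha|^{n_1}$), the two estimates combine with the linear comparability of $n_1$ and $Z$ to yield an effective bound on $n_1$.

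The remaining regime $n_1 - n_2 < \eta n_1$, in which $\lambda_1 a_1\alpha^{n_1}$ is not isolated as the dominant term, is the principal difficulty. I would handle it by choosing a window width $H$ (calibrated against the constants produced by Matveev) and grouping indices $\mathcal{J} := \{j : n_1 - n_j \leq H\}$. The aggregate main term then equals $a_1\alpha^{n_1-H} M$ where
\[
M := \sum_{j\in\mathcal{J}} \lambda_j\,\alpha^{H-(n_1-n_j)}
\]
is an integer polynomial in $\alpha$ of degree $\leq H$ with coefficient bound $\max_j|\lambda_j|$; admits dominance forces $|M| \gg |\alpha|^H$, since the tail contributions from $j \notin \mathcal{J}$ together with the subdominant roots are negligible compared to $|\alpha|^{n_1}$. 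A second application of Matveev to
\[
\Lambda' := (n_1-H)\log\alpha + \log\bigl(a_1 M / w\bigr) - \sum_{i=1}^s z_i\log p_i,
\]
whose height depends effectively on $H$ and on the heights of $\alpha$ and the $\lambda_j$, then bounds $n_1$.

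The main technical obstacle is precisely this small-gap case: tracking the height of the algebraic number $M$, and invoking admits dominance in just the right form to prevent $|M|$ from degenerating, are the delicate bookkeeping steps. Once these are in place, the rest is a standard linear-forms-in-logarithms calculation, and the comparability established in the first paragraph converts the resulting bound on $n_1$ into the required bound on $Z$.
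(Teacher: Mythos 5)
Your large-gap case is fine, and your two-sided estimate relating $n_1$ and $\max z_i$ matches the paper's Lemma \ref{lem:lemma-2}. The genuine gap is exactly where you flag the ``principal difficulty'': the small-gap regime is not closed by a single second application of Matveev to a window of width $H$. The height of $M=\sum_{j\in\mathcal{J}}\lambda_j\alpha^{H-(n_1-n_j)}$ grows linearly in $H$, so Theorem \ref{thm:matveev} only gives $|\Lambda'|>\exp\bigl(-\kappa\,H\log(n_1+Z)\bigr)$, while the upper bound on $|\Lambda'|$ coming from the terms outside the window is only $\ll|\alpha|^{-G}$ with $G=\min_{j\notin\mathcal{J}}(n_1-n_j)$, and nothing prevents $G=H+1$. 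Comparing the two bounds then yields $G\log|\alpha|\ll\kappa H\log n_1$, which is vacuous; no admissible ``calibration'' of a single fixed $H$ against Matveev's constants fixes this, because the calibration would have to anticipate the unknown gap structure $n_1-n_2,\dots,n_1-n_k$. What is needed is an iteration over the terms: the paper (following Ziegler and Mazumdar--Rout) proves by induction on $m$ that $n_1-n_m\leq N_m(\log n_1)^{m-1}$, forming at stage $m$ the linear form $\Lambda_m$ whose algebraic coefficient $\gamma_{m-1}$ (Lemma \ref{lem:mazumdar-lemma3.7}) involves only the already-bounded gaps $n_1-n_2,\dots,n_1-n_{m-1}$, so its height is $O\bigl((\log n_1)^{m-2}\bigr)$, while the new gap $n_1-n_m$ sits in the exponent of the upper bound; this decoupling is what makes each Matveev step non-vacuous, at the price of the bound degrading by one factor of $\log n_1$ per stage. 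Only after all gaps are bounded polynomially in $\log n_1$ does the final linear form give $n_1<N_{\max}(\log n_1)^k$, finished off by Lemma \ref{lem:not_log}.

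Two secondary omissions: you never treat the degenerate cases $\Lambda=0$ and $\Lambda'=0$, which Matveev's theorem excludes and which the paper handles separately via a height comparison together with Lemma \ref{lem:lemma-2}(3); and your appeal to ``admits dominance forces $|M|\gg|\alpha|^H$'' is really Proposition \ref{prop:proposition-1} (Ziegler's transfer of dominance from the sequence terms to pure powers of $\alpha$, valid for partial sums $1\leq K\leq k$), which needs to be invoked explicitly rather than read off from Definition \ref{def:admit-dominance}. With the inductive gap-bounding structure put in place of the single-window argument, your outline would essentially reproduce the paper's proof.
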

Since $n_i,z_j$ are nonnegative integers, by bounding the solutions to equation \eqref{eq:main-equation-generalized} from above, we prove that there are finitely many solutions. 

In Section \ref{sec:background}, we provide definitions of the main terms needed in Theorem~\ref{thm:main-thm-generalized} and supply some known results that we will need in the proof. Section \ref{sec:main-theorem-proof} is dedicated to proving Theorem \ref{thm:main-thm-generalized}. We begin with proving some auxiliary lemmas. Then we divide the bulk of the proof into two distinct steps. In Section \ref{sec:generalized-bounding-difference}, we use an induction argument on $m$ where $2\leq m \leq k$ to first bound the range of the differences of the exponents $n_1-n_m$. In Section \ref{sec:generalized-bounding-n1}, using the results from the previous section and bounds on $z_i$, we attain an absolute upper bound on $n_1$. Lastly, in Table \ref{tab:constants} of the Appendix,  
we explicitly define most of the constants that are used throughout the paper. 

\section{Background}\label{sec:background}

Before we delve into the proof of our main result, we want to outline all the terms, definitions, and preliminary results that are used in the proof of our main result. To begin, let us recall the definition of a linear recurrence and what it means for it to be simple and non-degenerate.

\begin{definition}
A sequence $\{U_n\}_{n \geq 0}$ is a \emph{linear recurrence sequence over the integers} if for all integers $n\geq 0$, the $n$-th term in the sequence can be determined from the previous $d$ terms for some fixed positive integer $d$; that is, there exists an integer $d\geq 2$ and $a_i\in\Z$ for $1\leq i\leq d$ such that
$$U_n = a_1U_{n-1} + a_2U_{n-2} +\ldots + a_dU_{n-d}$$
with $U_{n-i}\in\mathbb{Z}$ for all $i\geq 1$.
\end{definition}

Note that the first $d$ terms of the sequence $\{U_n\}_{n \geq 0}$ must be given to fully determine a linear recurrence sequence, thus $d$ is called the order of $\{U_n\}$. The other main determining components of such a sequence are the coefficients $a_i$ for $1\leq i\leq d$. Define a {\it companion polynomial} to $\{U_n\}_{n\geq 0}$, which is given by $$f(x)=x^d-a_1x^{d-1}-\ldots -a_{d-1}x -a_d,$$ with roots $\alpha_1,\alpha_2,\dots,\alpha_d\in\mathbb{C}$ and degree $d$, given by the order of the sequence. Without loss of generality, we can relabel the roots so that $|\alpha_1|\geq|\alpha_2|\geq \cdots \geq |\alpha_d|$. If there is a largest root, $|\alpha_1|>|\alpha_i|$ for all $2\leq i\leq d$, then we will write the largest root of the companion polynomial $f(x)$ simply as $\alpha$, and call it the {\it dominant root}. The sequence is called \emph{non-degenerate} if, for all $1\leq i,j\leq d$ such that $\alpha_i\neq \alpha_j$, we have that $\alpha_i/\alpha_j$ is not a root of unity. We note that a non-degenerate sequence with dominant root $\alpha$ has the property $\alpha>1$; otherwise, all roots of $f(x)$ are roots of unity by a result originally due to Kronecker \cite{greiter1978simple}, thus contradicting the definition of a non-degenerate recurrence sequence. Finally, the sequence $\{U_n\}$ is said to be \emph{simple} if there exist algebraic numbers $u,u_2,\dots,u_d$, each of degree at most $d$ contained in $\Q(\alpha_1,\dots,\alpha_d)$, such that \begin{align}\label{eq:closed-form}
    U_n=u\alpha^n+\sum_{j=2}^d u_j\alpha_j^n.
\end{align}

We now provide some auxiliary results that will be used in the proof of Theorem \ref{thm:main-thm-generalized}. 

\begin{lemma}[Peth\H o, de Weger \cite{petho1986}]\label{lem:not_log}
Let $u,v,$ and $h$ be real numbers, $u,v\geq 0$, $h\geq 1$, and $x_0\in\R$ be the largest solution of the equation $$x_0=u+v(\log x_0)^h.$$ Then, $$x_0<\max\{2^h(u^{1/h}+v^{1/h}\log (h^hv))^h,2^h(u^{1/h}+2e^2)^h\}.$$
\end{lemma}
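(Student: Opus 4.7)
My strategy is the classical two-step reduction for logarithmic inequalities of this shape. First I would substitute $y := x_0^{1/h}$ to linearize. Because $t \mapsto t^{1/h}$ is concave on $[0,\infty)$ and vanishes at the origin, it is subadditive: $(a+b)^{1/h} \leq a^{1/h} + b^{1/h}$ for $a,b \geq 0$ and $h \geq 1$. Applying this to the defining equation $x_0 = u + v(\log x_0)^h$ gives
\[
y \;\leq\; u^{1/h} + v^{1/h}\log(y^h) \;=\; A + B\log y,
\]
where $A := u^{1/h}$ and $B := h v^{1/h}$. A short computation yields
\[
B\log B \;=\; h v^{1/h}\log(h v^{1/h}) \;=\; v^{1/h}\log(h^h v),
\]
so that, after raising back through $x_0 = y^h$, bounds of the form $y < 2(A + B\log B)$ and $y < 2(A + 2e^2)$ produce exactly the two arguments of the max in the statement.

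Next I would prove the reduced auxiliary claim: any $y \geq 1$ satisfying $y \leq A + B\log y$ obeys $y < \max\{2(A + B\log B),\, 2(A + 2e^2)\}$. I would argue by contradiction, supposing $y$ exceeds both candidates. Then $y > 2A$ forces $y - A > y/2$, and combined with the hypothesis this gives $B\log y > y/2$, i.e.\ $y/\log y < 2B$. In the regime $B \geq e^2$ I would exploit that $t \mapsto t/\log t$ is strictly increasing on $[e,\infty)$; plugging $t = 2(A + B\log B) \geq 2B\log B$ into this ratio, expanding $\log(2(A + B\log B))$, and comparing the result with $2B$ yields the desired contradiction. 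In the complementary regime $B < e^2$, the logarithmic term $B\log y$ is controlled directly by $y$, and a careful calibration (using that $(\log y)/y$ attains its maximum $1/e$ at $y = e$) forces $y < 2(A + 2e^2)$.

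The main obstacle will be the second step, in particular the sharp constant $2e^2$ on the secondary branch. The two candidates in the max need to be balanced against each other: when $A$ is small, $2(A + B\log B)$ can dip below $B$, where the monotonicity of $y - B\log y$ breaks down, and when $B$ is small the first branch becomes useless. The $2(A + 2e^2)$ term is engineered precisely to cover the latter regime, and pinning down the constant $2e^2$ requires a careful optimization rather than the crude estimates (e.g.\ $\log y \leq 2\sqrt{y}$) that yield a larger absolute constant. Once both regimes are handled, raising through $x_0 = y^h$ and matching the constants is routine.
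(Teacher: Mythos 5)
The paper does not prove this lemma at all; it is quoted from Peth\H{o}--de Weger with a citation, so there is no in-paper argument to compare against, and your proposal has to stand on its own. Your first step is fine and is the standard route: with $y=x_0^{1/h}$ and the subadditivity of $t\mapsto t^{1/h}$ (valid here since $x_0\geq 1$ may be assumed, the bound being trivial otherwise) one gets $y\leq A+B\log y$ with $A=u^{1/h}$, $B=hv^{1/h}$, and the identity $B\log B=v^{1/h}\log(h^hv)$ correctly matches the constants after raising back to the $h$-th power.

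The gap is in your second step, and it is not just a matter of calibration. After you halve ($y>2A$ gives $y/\log y<2B$), the only information you retain is $y/\log y<2B$ together with the contradiction hypotheses $y\geq 2(A+B\log B)$ and $y\geq 2(A+2e^2)$; but these three statements are simultaneously satisfiable, e.g.\ $A=0$, $B=e^2$, $y=30$: then $2(A+B\log B)=2(A+2e^2)=4e^2\approx 29.56\leq 30$ while $y/\log y\approx 8.8<2B\approx 14.8$. So no contradiction can be extracted from them, no matter how the monotonicity of $t/\log t$ is deployed. Concretely, your plan needs $\frac{2(A+B\log B)}{\log(2(A+B\log B))}\geq 2B$, i.e.\ $A+B\log B\geq B\log\bigl(2(A+B\log B)\bigr)$, which for $A=0$ reads $0\geq\log 2+\log\log B$ and is false for every $B\geq e^2$; the halving step is genuinely lossy, since $y<2B\log y$ alone only forces $y$ below the largest root of $t=2B\log t$, which is of size $2B(\log(2B)+\log\log(2B))>2B\log B$. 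The repair is to use the hypothesis $y\leq A+B\log y$ together with a tangent-line bound for the logarithm rather than the ratio trick: from $\log y\leq\log(2B)+\frac{y}{2B}-1$ (valid for all $y>0$, $B>0$) one gets $y\leq A+\frac{y}{2}+B\log(2B)-B$, hence $y\leq 2A+2B\bigl(\log B+\log 2-1\bigr)<2(A+B\log B)$, which is the first branch of the maximum; the second branch $2(A+2e^2)$ then only needs to cover the degenerate situations ($v=0$, or $x_0<1$), so no delicate optimization of the constant $2e^2$ is required. Alternatively, follow the original Peth\H{o}--de Weger argument, but as written your case $B\geq e^2$ does not close.
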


Next, for ease in notation, we follow Pink and Ziegler \cite{pink2016effective} and use $\log_*(x)$ in place of $\max\{0, \log x\}$ for $x\in \R_{>0}$. 
Specifically, for a real number $x>0$ we define  \begin{equation} \label{definition-log-asterisk}
    \log_*:\R_{>0}\to \R_{\geq 0} \text{ where }  x\mapsto \max\{0,\log x\}.
\end{equation}

\begin{definition}[Smart, \cite{nigel1998smart}]\label{def:log_height}
Let $\eta$ be an algebraic number of degree $n$ with minimal polynomial $$p(x)=q_0x^n+q_1x^{n-1}+\ldots+q_n=q_0\prod_{i=1}^n(x-\eta_i),$$ where all $q_i$ are relatively prime integers, $q_0>0$, and the $\eta_i$ are conjugates of $\eta$. Then, we define the \emph{absolute logarithmic height of $\eta$} as $$h(\eta)=\frac{1}{n}\left(\log |q_0|+\sum_{i=1}^n\log_{*}|\eta_i|\right). $$ 
Three key properties of the absolute logarithmic height are as follows, for $\eta_1,\dots, \eta_t$ algebraic numbers, $a \in \Z$.
\begin{enumerate}
    \item $h(\eta_1\cdots\eta_t) \leq \sum_{i=1}^{t}h(\eta_i)$,
    \item $h(\eta_1 +\ldots + \eta_t) \leq \log(t) + \sum_{i=1}^{t}h(\eta_i)$, and
    \item $h(\eta^a) = |a|h(\eta)$.
\end{enumerate}
\end{definition}

The following is an implication of Matveev's~\cite{matveev2000explicit} monumental result on bounding linear forms in logarithms given by Bugeaud, Mignotte, and Siksek~\cite{bugeaud2006classical} to solve exponential Diophantine equations. We will use their version here for linear recurrences. 
\begin{theorem}[Bugeaud, Mignotte, Siksek \cite{bugeaud2006classical}]\label{thm:matveev} Let $\mathbb{K}/\mathbb{Q}$ be a number field of degree $D$, let $\gamma_1,\dots,\gamma_t$ be positive real numbers in $\mathbb{K}$, and $b_1,\dots,b_t$ be rational integers. Put $$ \Lambda = \gamma_1^{b_1}\cdots\gamma_t^{b_t} - 1\quad \text{and}\quad B \geq \max\{|b_1|,\dots,|b_t|\}.$$ Let $A_i \geq \max\{Dh(\gamma_i), |\log \gamma_i|, 0.16\}$ be real numbers for $i = 1,2,\dots,t$. Then, assuming that $\Lambda \neq 0$, {we have} $$\log |\Lambda| > -1.4 \cdot 30^{t + 3} t^{4.5} D^2(1 + \log D)(1 + \log B)A_1\cdots A_t.$$
\end{theorem}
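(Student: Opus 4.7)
The plan is to prove this lower bound via Baker's method in transcendence theory, following Matveev's original approach. The strategy combines a Siegel-type auxiliary function construction with analytic extrapolation and a zero estimate, all executed with careful quantitative control on every parameter.

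Set $\lambda_i := \log \gamma_i$, which are well-defined real numbers since $\gamma_i > 0$, and put $L := b_1\lambda_1 + \cdots + b_t\lambda_t$, so that $\Lambda = e^L - 1$. Suppose for contradiction that $|\Lambda|$ is strictly smaller than the claimed bound; since $|L|$ is comparable to $|\Lambda|$ when the latter is small, this forces $|L|$ to be tiny. Choose integer degree parameters $L_1, \ldots, L_t$, a multiplicity $T$, and an interpolation length $N$ as explicit polynomial expressions in $B$, $D$, the $A_i$, and $t$, optimized so that the forthcoming Siegel argument is feasible with room to spare.

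Apply Siegel's lemma over $\mathbb{K}$ to produce a nonzero polynomial $P(x_1, \ldots, x_t)$ with algebraic integer coefficients in $\mathbb{K}$, of degree at most $L_i$ in $x_i$ and of controlled absolute logarithmic height, such that the entire function $F(z) := P\bigl(\gamma_1^{z}, \ldots, \gamma_t^{z}\bigr)$ vanishes to multiplicity $T$ at each of $z = 0, 1, \ldots, N$. Next, exploiting the smallness of $|L|$, apply the maximum modulus principle to $F$ on a large disk in $\mathbb{C}$: on this disk $F$ is analytically close to an algebraic function in $\gamma_1^z, \ldots, \gamma_{t-1}^z$ alone, so $|F(N+1)|, |F(N+2)|, \ldots$ are extremely small. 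A Liouville-type inequality then forces each of these algebraic values, having bounded denominators and degree, to vanish identically. Iterating this extrapolation greatly extends the set of zeros of $F$ beyond the originally imposed ones.

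Finally, a multiplicity/zero estimate on the algebraic torus $\mathbb{G}_m^t$ (Philippon's theorem, or the sharper version Matveev requires) shows that $F$ cannot vanish at so many integer points with such multiplicity unless a nontrivial multiplicative relation $\gamma_1^{c_1}\cdots\gamma_t^{c_t} = 1$ holds for integers $c_i$ not all zero; this either directly contradicts $\Lambda \neq 0$ or enables a dimensional descent to a smaller $t$, where one argues inductively. The main obstacle is not the architecture of the proof, which is the standard Baker/Matveev template, but rather the explicit optimization throughout: squeezing out the precise constant $1.4 \cdot 30^{t+3} t^{4.5}$ together with the exact dependence $D^2(1+\log D)(1+\log B) A_1 \cdots A_t$ demands very delicate balancing of $L_i$, $T$, $N$, the disk radius, and the height bounds at every stage, and this balancing is the genuine technical content of Matveev's theorem.
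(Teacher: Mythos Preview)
The paper does not prove this statement at all: Theorem~\ref{thm:matveev} is quoted from Bugeaud, Mignotte, and Siksek~\cite{bugeaud2006classical} (itself a reformulation of Matveev~\cite{matveev2000explicit}) and is used purely as a black-box tool in Section~\ref{sec:main-theorem-proof}. There is therefore no ``paper's own proof'' to compare your proposal against.

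Your outline is, broadly speaking, the Baker--Matveev template: auxiliary function via Siegel's lemma, extrapolation by the maximum modulus principle and Liouville, then a zero estimate on $\mathbb{G}_m^t$ to reach a contradiction or enable descent. As a high-level sketch this is faithful to how the result is actually established in the transcendence literature. But as written it is only a plan, not a proof: none of the parameters $L_i$, $T$, $N$, the disk radius, or the height bounds are specified, the Siegel system and the zero estimate are not stated precisely, and the entire substance of the theorem---the explicit constant $1.4\cdot 30^{t+3}t^{4.5}D^2(1+\log D)(1+\log B)A_1\cdots A_t$---is exactly what you defer with the phrase ``very delicate balancing.'' In a paper like this one, which \emph{uses} Matveev's theorem rather than reproves it, the correct move is simply to cite the original sources, and that is what the authors do.
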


\section{Proof of Main Theorem}\label{sec:main-theorem-proof}
To begin the proof of Theorem \ref{thm:main-thm-generalized}, let $\{U_n\}_{n\geq0}$ be a non-constant, simple, non-degenerate, linear recurrence sequence defined over the integers with dominant root $\alpha$. 
Further, assume that the $k$-tuple of nonzero integers $(\lambda_1, \dots, \lambda_k)$ admits dominance for the sequence $\{U_n\}_{n\geq0}$ and assume that $(n_1,\dots,n_k,z_1,\dots,z_k)$ is a solution to equation \eqref{eq:main-equation-generalized} with $n_1 \geq 3$ and $n_1 > n_2 > \dots > n_k$.

Applying Definition \ref{def:admit-dominance} to the right hand side of our equation, we obtain $C_1$ such that \[|\lambda_1 U_{n_1} + \dots + \lambda_kU_{n_k}| > C_1|U_{n_1}|.\] In our argument, we wish to apply Matveev's theorem to obtain bounds on a linear form in logarithms. To do this, we start by applying Proposition \ref{prop:proposition-1} to the $k$-tuple $(\lambda_1,\dots,\lambda_k)$. We include the statement for convenience.
\begin{proposition}[Ziegler, \cite{ziegler2019effective}]\label{prop:proposition-1}

If $(\lambda_1,\dots,\lambda_k)$ admits dominance for $\{U_n\}_{n\geq0}$, there exists a positive, effectively computable constant \newconstant{ziegler-const-2}$C_2$ such that \begin{align}\label{eq:ziegler-prop}
    |\lambda_1\alpha^{n_1}+\ldots+\lambda_K\alpha^{n_K}|>C_2|\alpha|^{n_1}
\end{align}
for any $1\leq K\leq k$ and any integers $n_1>\ldots>n_K\geq 0$.
\end{proposition}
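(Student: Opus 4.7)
My plan is to prove the bound by induction on $K$, deducing the non-trivial content from admits dominance only through a separate non-vanishing lemma. For the inductive step at stage $K$, I apply the standard dominant-term split: either $n_K$ lies far below $n_1$, in which case the $(K{-}1)$-bound together with a geometric-tail estimate controls the sum, or $n_1-n_K$ is bounded by an effectively computable threshold $T$, in which case the difference pattern $(n_1-n_K,\dots,n_{K-1}-n_K,0)$ ranges over only finitely many possibilities and taking a minimum over the corresponding power sums yields the bound.

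In detail, the base case $K=1$ is immediate from $|\lambda_1\alpha^{n_1}|\geq|\alpha|^{n_1}$. For the inductive step at $K$, fix $T$ large enough that $|\lambda_K|/|\alpha|^T\leq C_{K-1}/2$, where $C_{K-1}$ is the constant from the inductive hypothesis. If $n_1-n_K\geq T$, the hypothesis applied to the sub-tuple $(n_1,\dots,n_{K-1})$ gives $|\sum_{i=1}^{K-1}\lambda_i\alpha^{n_i}|>C_{K-1}|\alpha|^{n_1}$, while $|\lambda_K\alpha^{n_K}|\leq(C_{K-1}/2)|\alpha|^{n_1}$, and the reverse triangle inequality yields $|\sum_{i=1}^K\lambda_i\alpha^{n_i}|>(C_{K-1}/2)|\alpha|^{n_1}$. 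If instead $n_1-n_K<T$, factor $\alpha^{n_K}$ from the sum; what remains is one of finitely many fixed algebraic numbers indexed by the difference pattern, and the minimum $\varepsilon$ of their absolute values (positive by the lemma below) is effectively computable. Since $|\alpha|^{n_K}\geq|\alpha|^{n_1-T}$, this yields $|\sum_{i=1}^K\lambda_i\alpha^{n_i}|\geq(\varepsilon/|\alpha|^T)|\alpha|^{n_1}$. Setting $C_K=\min\bigl(C_{K-1}/2,\;\varepsilon/|\alpha|^T\bigr)$ and iterating up to $K=k$ produces the asserted $C_2$.

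The main obstacle is the non-vanishing lemma: if $(\lambda_1,\dots,\lambda_k)$ admits dominance, then $\sum_{i=1}^K\lambda_i\alpha^{m_i}\neq 0$ for every $K\leq k$ and every valid tuple $(m_1,\dots,m_K)$. I plan to prove it by a shift-and-extend contradiction: supposing the sum vanishes for some tuple, it also vanishes after replacing each $m_i$ by $m_i+t$, and I complete this shifted tuple to a $k$-tuple by appending the fixed indices $(k-K-1,k-K-2,\dots,0)$, which is valid once $t$ is large. Substituting the closed form $U_n=u\alpha^n+R(n)$ with $|R(n)|\leq M|\alpha_2|^n$, the first $K$ terms of $\sum_{i=1}^k\lambda_iU_{n_i}$ contribute only $O(|\alpha_2|^t)$ while the appended terms contribute a fixed bounded constant, whereas $|U_{n_1}|\sim|u||\alpha|^{m_1+t}\to\infty$. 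For $t$ large enough, the ratio $|\sum\lambda_iU_{n_i}|/|U_{n_1}|$ falls below the admits-dominance constant $C_1$, contradicting the hypothesis. Every constant arising in the argument depends effectively on the defining data of $\{U_n\}$, on $C_1$, and on the finite case analyses, so $C_2$ is effectively computable.
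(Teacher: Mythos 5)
Your argument is correct and, in essence, reconstructs the proof the paper itself omits: the text defers to Ziegler's Claim 1, whose ``recursive process'' for $C_{\ref*{ziegler-const-2}}$ is exactly your induction on $K$ (large gap $n_1-n_K$ handled by the previous constant plus a geometric tail, bounded gap handled by a minimum over finitely many non-vanishing power sums, and non-vanishing forced by the shift-and-append contradiction with the admits-dominance inequality). The only inputs you use beyond the definition are the standing assumptions $|\alpha|>1$ and $u\neq 0$, which the paper already relies on throughout (e.g.\ in dividing by $|u|$), so no genuine gap remains.
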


The computation of the constant $C_{\ref*{ziegler-const-2}}$ is detailed in \cite{ziegler2019effective} in Claim 1 which is part of the proof of Proposition 1. $C_{\ref*{ziegler-const-2}}$, which depends on $\{U_n\}$, can be found explicitly through a recursive process. We refer the interested reader to \cite{ziegler2019effective} for the details.

Moving forward, we will use a superscript in parentheses to emphasize a constant's dependence on the number of recurrence terms in our equation.

The following lemma allows us to compute the modified height used in Matveev's theorem in Section \ref{sec:main-theorem-proof}. Recall that $\{U_n\}$ has a companion polynomial with roots $\alpha_i$ and since $\{U_n\}$ is a non-degenerate recurrence sequence, $\alpha>1$ is the dominant root.
\begin{lemma}\label{lem:mazumdar-lemma3.7}
Let $D = [\Q(\alpha, \dots, \alpha_d) : \Q]$ and let
\begin{align*}
\gamma_m =   \begin{cases}
                |w| |u|^{-1}\left\lvert \lambda_1 + \lambda_2\alpha^{n_2 - n_1} +\ldots + \lambda_{m}\alpha^{n_{m} - n_1} \right\rvert^{-1},    &m \geq 2 \\
                |w| |u|^{-1}\left\lvert \lambda_1 \right\rvert^{-1},    &m = 1
                \end{cases}
\end{align*}
be an algebraic number in $\Q(\alpha, \dots, \alpha_d)$ for $1 \leq m \leq k$, where $u$ is given by equation \eqref{eq:closed-form}. Then, there exists a constant \newconstant{maz-lemma-const-1}$C_{\ref*{maz-lemma-const-1}}^{(m)}$ depending on $m, w, u$, and  $\lambda_1,\dots,\lambda_m$, such that
\begin{align*}
A_3(m)  
    &=      \begin{cases}
            D C_{\ref*{maz-lemma-const-1}}^{(m)} + D\left( \sum_{j=2}^{m} |n_1 - n_j| \right) h(|\alpha|),  &m\geq 2 \\
            D C_{\ref*{maz-lemma-const-1}}^{(m)},                                                                 &m=1
            \end{cases} \\
    &\geq   \max\{Dh(\gamma_m), |\log \gamma_m|, 0.16\}.
\end{align*}
\end{lemma}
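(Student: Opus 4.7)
The plan is to bound $h(\gamma_m)$ directly using the three height properties listed in Definition~\ref{def:log_height}, then translate that bound into the required bound on $A_3(m)$ by using the elementary fact that for a positive real algebraic number $\eta$ of degree at most $D$ one has $|\log \eta| \le D\, h(\eta)$.

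For the simpler case $m=1$, $\gamma_1=|w|\,|u|^{-1}\,|\lambda_1|^{-1}$ is a product of three algebraic numbers, so property~(1) together with $h(x^{-1})=h(x)$ (a special case of property~(3)) immediately yields $h(\gamma_1)\le h(w)+h(u)+h(\lambda_1)$, a quantity depending only on $w,u,\lambda_1$. For the main case $m\ge 2$, I would factor $\gamma_m=|w|\cdot|u|^{-1}\cdot|\Sigma_m|^{-1}$ with $\Sigma_m=\lambda_1+\sum_{j=2}^{m}\lambda_j\alpha^{n_j-n_1}$, then apply the height properties in sequence: property~(1) (combined with $h(x^{-1})=h(x)$) reduces the task to bounding $h(\Sigma_m)$; property~(2) bounds this by $\log m + \sum_{j=1}^{m}h(\lambda_j\alpha^{n_j-n_1})$; and properties~(1) and~(3) give $h(\lambda_j\alpha^{n_j-n_1})\le h(\lambda_j)+(n_1-n_j)h(\alpha)$. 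Since $\alpha>1$ is real we have $h(|\alpha|)=h(\alpha)$, so the $n$-dependent contribution $\sum_{j=2}^{m}(n_1-n_j)h(|\alpha|)$ is exactly the second term in the claimed expression for $A_3(m)$, while $h(w)+h(u)+\log m+\sum_{j=1}^{m}h(\lambda_j)$ depends only on $m,w,u,\lambda_1,\ldots,\lambda_m$ and is absorbed into $C_{\ref*{maz-lemma-const-1}}^{(m)}$.

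To control the remaining two quantities inside the maximum, observe that $\gamma_m$ is a positive real algebraic number lying in $\Q(\alpha,\alpha_2,\ldots,\alpha_d)$, so $\deg(\gamma_m)\le D$. From Definition~\ref{def:log_height}, for any nonzero algebraic $\eta$ one has $\log_*|\eta|\le\deg(\eta)\,h(\eta)$; applying this to both $\gamma_m$ and $\gamma_m^{-1}$ and using $h(\gamma_m^{-1})=h(\gamma_m)$ yields $|\log\gamma_m|\le D\, h(\gamma_m)$, which is in turn dominated by $A_3(m)$ by the previous paragraph. The threshold $0.16$ is enforced by enlarging $C_{\ref*{maz-lemma-const-1}}^{(m)}$ if necessary so that $DC_{\ref*{maz-lemma-const-1}}^{(m)}\ge 0.16$. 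No substantial obstacle is anticipated: the argument is essentially careful bookkeeping, and the only real care needed is separating the $n$-dependent contribution (which must appear in the $\sum(n_1-n_j)h(|\alpha|)$ summand) from the purely data-dependent constants (which are absorbed into $C_{\ref*{maz-lemma-const-1}}^{(m)}$).
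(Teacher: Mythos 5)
Your proof is correct, and it diverges from the paper's in one meaningful respect: the handling of $|\log\gamma_m|$. The paper bounds $|\log\gamma_m|$ directly by producing a two-sided estimate on $\gamma_m$ itself --- an upper bound $\gamma_m \le |w|/(|u|C_{\ref*{ziegler-const-2}}^{(m)})$ coming from Proposition~\ref{prop:proposition-1} (Ziegler's lower bound on $|\lambda_1 + \lambda_2\alpha^{n_2-n_1}+\ldots|$), and a lower bound $\gamma_m^{-1} \le |w|^{-1}|u|\,(\max_j|\lambda_j|)\,m$ from the triangle inequality --- and then takes the max of the resulting $|\log\gamma_m|$ bounds, folding both into $C_{\ref*{maz-lemma-const-1}}^{(m)}$. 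You instead invoke the general inequality $\log_*|\eta|\le \deg(\eta)\,h(\eta)$ (read off directly from Definition~\ref{def:log_height}), apply it to both $\gamma_m$ and $\gamma_m^{-1}$, and conclude $|\log\gamma_m|\le D\,h(\gamma_m)$, so the single height estimate dominates all three quantities in the maximum. Your route is cleaner and avoids reusing Proposition~\ref{prop:proposition-1} here, producing a slightly different (in fact smaller) $C_{\ref*{maz-lemma-const-1}}^{(m)}$, which is fine since the lemma only asserts the existence of such a constant; the paper's approach, on the other hand, yields the particular explicit constant recorded in Table~\ref{tab:constants}. The $h(\gamma_m)$ computation itself --- splitting off $|w|$ and $|u|^{-1}$, applying property~(2) to the sum, and property~(3) to the powers $\alpha^{n_j-n_1}$ --- matches the paper's verbatim, and your treatment of the $m=1$ case and the $0.16$ floor are both sound.
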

\begin{proof}
First, we estimate $\log \gamma_m$ for $m\geq 2$. By equation \eqref{eq:ziegler-prop}, 
\begin{align*}
\gamma_m    
        &=      \frac{|w|}{|u|\left\lvert \lambda_1 + \lambda_2\alpha^{n_2 - n_1} +\ldots + \lambda_{m}\alpha^{n_{m} - n_1} \right\rvert} \\
        &\leq   \frac{|w|}{|u|C_{\ref*{ziegler-const-2}}^{(m)}}.
\end{align*}
Similarly,
\begin{align*}
\gamma_m^{-1}
        &=      |w|^{-1} |u|\left\lvert \lambda_1 + \lambda_2\alpha^{n_2 - n_1} +\ldots + \lambda_{m}\alpha^{n_{m} - n_1} \right\rvert \\
        &\leq   |w|^{-1} |u|  \left( \max_{1 \leq j \leq m}|\lambda_j| \right) m.
\end{align*}
Thus,
\begin{align} \label{eq:log-gamma3t}
|\log \gamma_m|  
    &\leq   \max\left\{\log |w| - \log |u| - \log C_{\ref*{ziegler-const-2}}^{(m)}\right.,\\
    &\quad\quad \left.\log |u| + \log \left( \max_{1 \leq j \leq m}|\lambda_j| \right) + \log m - \log |w|\right\}\nonumber.
\end{align}
Next, we estimate $h(\gamma_m)$ using the properties of the logarithmic height given in Definition \ref{def:log_height}. 
\begin{align}\label{eq:height-gamma3t}
h(\gamma_m)
    &=  h\left( |w| |u|^{-1} \left\lvert \lambda_1 + \lambda_2\alpha^{n_2 - n_1} +\ldots + \lambda_{m}\alpha^{n_{m} - n_1} \right\rvert^{-1} \right) \nonumber \\
    &\leq   \log |w| + h(|u|) + h(\lambda_1 + \lambda_2\alpha^{n_2 - n_1} +\ldots + \lambda_{m} \alpha^{n_{m} - n_1}) \nonumber \\
    &\leq   \log |w| + h(|u|) + \left( \sum_{j=1}^{m} h(\lambda_j) \right) + \left( \sum_{j=2}^{m} |n_j - n_1| \right)  h(|\alpha|) + \log m \nonumber \\
    &\leq   \log |w| + h(|u|) + \left( \sum_{j=1}^{m} \log |\lambda_j| \right) + \left( \sum_{j=2}^{m} |n_j - n_1| \right)  h(|\alpha|) + \log m \nonumber \\
    &\leq   \log |w| + h(|u|) + m  \log \left( \max_{1 \leq j \leq m}|\lambda_j| \right) + \left( \sum_{j=2}^{m} |n_j - n_1| \right)  h(|\alpha|) + \log m.
\end{align}
Comparing inequalities \eqref{eq:log-gamma3t} and \eqref{eq:height-gamma3t}, and multiplying by $D$ gives our desired result, where we define
\begin{align*}
    C_{\ref*{maz-lemma-const-1}}^{(m)} &= \log |w| + h(|u|) + m \log \left( \max_{1 \leq j \leq m}|\lambda_j| \right) + \log m \\
    &\quad\quad + \max \left \{ \log |w| - \log |u| - \log C_{\ref*{ziegler-const-2}}^{(m)}, \log |u| + \log\left( \max_{1 \leq j \leq m}|\lambda_j| \right) + \log m - \log |w| \right\}.
\end{align*}
\end{proof}

We now state and prove a lemma to obtain bounds on $z_i$ in terms of $n_1$. 
\begin{lemma}\label{lem:lemma-2}
There exist constants \newconstantlem{bound}$c_{\ref*{bound}}$,  \newconstantlem{z1}$c_{\ref*{z1}}$, and \newconstantlem{zieg-lem}$c_{\ref*{zieg-lem}}$ such that the following statements hold.
\begin{enumerate}
    \item $|\lambda_1U_{n_1}+\ldots+\lambda_kU_{n_k}|<c_{\ref*{bound}}|\alpha_1|^{n_1}$.
    \item If $n_1>c_{\ref*{z1}}$, then we have $$z_i<\frac{2\log|\alpha_1|}{\log p_i}n_1$$ for $i=1,\dots,s$.
    \item If $(\lambda_1, \dots, \lambda_k)$ admits dominance, then $$|\lambda_1U_{n_1} + \ldots + \lambda_kU_{n_k}|>c_{\ref*{zieg-lem}}|\alpha|^{n_1}.$$
\end{enumerate}
\end{lemma}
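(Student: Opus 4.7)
The plan is to prove each of the three statements in turn, using the closed-form expression \eqref{eq:closed-form} for $U_n$ together with the hypothesis that $\alpha$ is the dominant root and, for (3), admits dominance via Definition \ref{def:admit-dominance}. For part (1), I would apply the triangle inequality to \eqref{eq:closed-form}: since $|\alpha_j| \leq |\alpha_1|$ for every $j$, we have $|U_n| \leq (|u| + \sum_{j=2}^d |u_j|)|\alpha_1|^n$, and summing $k$ terms while using $n_1 \geq n_i$ yields
\[
\Bigl|\sum_{i=1}^k \lambda_i U_{n_i}\Bigr| \leq k\,(\max_i |\lambda_i|)\Bigl(|u| + \sum_{j=2}^d |u_j|\Bigr)|\alpha_1|^{n_1}.
\]
Setting $c_{\ref*{bound}}$ slightly larger than the coefficient on $|\alpha_1|^{n_1}$ gives the strict inequality.

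For part (2), I would combine (1) with the right-hand side of \eqref{eq:main-equation-generalized}: since $p_j^{z_j} \geq 1$ for every $j$, we get $|w|\,p_i^{z_i} \leq c_{\ref*{bound}}|\alpha_1|^{n_1}$, and taking logarithms yields
\[
z_i \leq \frac{\log|\alpha_1|}{\log p_i}\,n_1 + \frac{\log(c_{\ref*{bound}}/|w|)}{\log p_i}.
\]
If the additive term is non-positive, the claim follows immediately; otherwise, choosing $c_{\ref*{z1}} := \log(c_{\ref*{bound}}/|w|)/\log|\alpha_1|$ ensures that for $n_1 > c_{\ref*{z1}}$ the additive term is at most the leading term, producing the factor $2$.

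For part (3), admits dominance provides $|\sum_i \lambda_i U_{n_i}| > C\,|U_{n_1}|$ directly. To turn this into a bound in terms of $|\alpha|^{n_1}$, I would apply the reverse triangle inequality to \eqref{eq:closed-form}: letting $\beta := \max_{j \geq 2} |\alpha_j| < |\alpha|$ (strict, by dominance of $\alpha$), we have
\[
|U_{n_1}| \geq |u|\,|\alpha|^{n_1} - \Bigl(\sum_{j=2}^d |u_j|\Bigr)\beta^{n_1} \geq \tfrac{|u|}{2}\,|\alpha|^{n_1}
\]
for all $n_1$ beyond an effectively computable threshold $N_0$. Combining gives the bound with $c_{\ref*{zieg-lem}} = C|u|/2$; the finitely many small cases $3 \leq n_1 < N_0$ can be handled by shrinking $c_{\ref*{zieg-lem}}$ if necessary, using the fact that the set of $n$ with $U_n = 0$ is finite in a non-degenerate sequence.

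The main obstacle I expect is essentially bookkeeping rather than mathematical depth: ensuring every constant is effectively computable and depends only on the intrinsic data (the recurrence, the coefficients $\lambda_i$, the integer $w$, and the primes $p_i$) rather than on a particular solution. The underlying estimates are routine triangle-inequality arguments combined with the observation that subdominant roots contribute negligibly compared with $|\alpha|^{n_1}$ for large $n_1$.
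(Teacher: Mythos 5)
Parts (1) and (2) of your proposal coincide with the paper's argument: the same triangle-inequality bound on the closed form gives $c_{\ref*{bound}}=k(\max_j|\lambda_j|)\sum_{j=1}^d|u_j|$ (your $|u|+\sum_{j\ge 2}|u_j|$ is the same, since $u_1=u$), and part (2) follows by taking logarithms and absorbing the constant once $n_1$ exceeds the threshold $c_{\ref*{z1}}=\log_*(c_{\ref*{bound}}/|w|)/\log|\alpha|$; your explicit case split on the sign of the additive term plays the role of the paper's use of $\log_*$.

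The genuine divergence is in part (3). The paper does not prove it at all --- it simply defers to Proposition 2 of Ziegler \cite{ziegler2019effective} --- whereas you supply a self-contained argument: reverse triangle inequality on the closed form to get $|U_{n_1}|\ge\tfrac{|u|}{2}|\alpha|^{n_1}$ for $n_1\ge N_0$, then combine with the admits-dominance inequality to obtain $c_{\ref*{zieg-lem}}=C|u|/2$ in the large-$n_1$ range, and handle the finitely many remaining $n_1$ separately. This is a reasonable route and the conclusion is correct, but the justification you give for the small-$n_1$ regime is not quite the right one. Invoking that $\{n:U_n=0\}$ is finite does not by itself let you "shrink $c_{\ref*{zieg-lem}}$": if $U_{n_1}=0$ for some $n_1$ in $[3,N_0)$, the dominance hypothesis only yields $|\sum_i\lambda_iU_{n_i}|>C\cdot 0=0$, which is not a usable lower bound. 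What actually closes the gap is that for each fixed $n_1<N_0$ there are only \emph{finitely many} admissible tuples $n_1>n_2>\cdots>n_k\ge 0$, and on each of these $|\sum_i\lambda_iU_{n_i}|$ is a fixed strictly positive number (strictness of the inequality in Definition \ref{def:admit-dominance} gives this even when $U_{n_1}=0$); taking the minimum of $|\sum_i\lambda_iU_{n_i}|/|\alpha|^{n_1}$ over this finite set, together with $C|u|/2$, produces an effectively computable $c_{\ref*{zieg-lem}}$. With that correction your proof of (3) is complete; as written, the appeal to the finiteness of the zero set is a red herring.
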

\begin{proof}
First, we prove part (1). We apply the triangle inequality and use $|\alpha|>|\alpha_2|\geq\ldots\geq |\alpha_d|$ to compute 
\begin{align*}
|\lambda_nU_n|
    &=  \left|\lambda_n \left(u\alpha^n+\sum_{j=2}^d u_j\alpha_j^n\right)\right|, \\
    &<|\lambda_n||\alpha|^n\sum_{j=1}^d|u_j|,
\end{align*}
so that \begin{align*}
|\lambda_1U_{n_1}+\ldots+\lambda_kU_{n_k}|
    &\leq|\lambda_1U_{n_1}|+\ldots+|\lambda_kU_{n_k}| \\
    &<    |\lambda_1||\alpha|^{n_1}\sum_{j=1}^d|u_j|+\ldots+|\lambda_k||\alpha|^{n_k}\sum_{j=1}^d|u_j|.
\end{align*}
Since we've assumed that $n_1\geq n_2 \geq\ldots\geq n_k$, we may re-write the above inequality as 
\begin{align*}
    |\lambda_1U_{n_1}+\ldots+\lambda_kU_{n_k}|&<c_{\ref*{bound}}|\alpha|^{n_1},
\end{align*}
where $$c_{\ref*{bound}}=k(\max_{1 \leq j \leq k}|\lambda_j|)\sum_{j=1}^d|u_j|.$$

Next we prove (2). So, using (1), we recall that for all $i$ from $1 \leq i \leq s$, 
\begin{align*}
    |w|p_i^{z_i}\leq |w|p_1^{z_1}\cdots p_s^{z_s}&=\lambda_1U_{n_1}+\ldots+\lambda_kU_{n_k} <c_{\ref*{bound}}|\alpha|^{n_1}.
\end{align*}
Taking real-valued logarithms in the above inequality, we have \begin{align}\label{eq:lemma-2-first-inequality}
    z_i\log p_i&\leq n_1\log|\alpha|\left(1+\frac{\log_*(c_{\ref*{bound}}/|w|)}{n_1\log|\alpha|}\right).
\end{align}
If we assume that $n_1> c_{\ref*{z1}} := \log_{*} (c_{\ref*{bound}} / |w|)/\log |\alpha|$, then, from inequality \eqref{eq:lemma-2-first-inequality} we get \begin{align*}
    z_i&<\frac{2\log|\alpha|}{\log p_i}n_1,
\end{align*}

For part (3), we defer the details to \cite[Proposition 2]{ziegler2019effective}.
\end{proof}

\subsection{Induction Argument Bounding \texorpdfstring{$n_1 - n_m$}{n1 - nm}}\label{sec:generalized-bounding-difference}

Now, we proceed by induction on $m$ where $2 \leq m \leq k$ to obtain a bound on $n_1 - n_m$.
In order to obtain an upper bound on our linear form that we will construct shortly, we first prove a lemma from Ziegler \cite{ziegler2019effective}. 
\begin{lemma}[Ziegler, \cite{ziegler2019effective}]\label{lem:generalized-lemma-1}
Under the assumptions above and assuming that $n \geq 3$, there exists a constant \newconstant{lemma-1-const}$C_{\ref*{lemma-1-const}}$ such that \begin{align}\label{ineq:lemma-1-inequality}
    |U_n-u\alpha^n|=\left|\sum_{j=2}^d u_j\alpha_j^n\right|&<C_{\ref*{lemma-1-const}}|\alpha_2|^n.
\end{align}
\end{lemma}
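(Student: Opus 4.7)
The plan is to read off the equality directly from the closed-form expression in equation \eqref{eq:closed-form}: since the sequence is simple, we have $U_n = u\alpha^n + \sum_{j=2}^{d} u_j \alpha_j^n$, so subtracting $u\alpha^n$ and taking absolute values yields $|U_n - u\alpha^n| = \bigl|\sum_{j=2}^{d} u_j \alpha_j^n\bigr|$ with no work.

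For the strict upper bound, I would apply the triangle inequality to obtain
\[
\left| \sum_{j=2}^{d} u_j \alpha_j^n \right| \;\leq\; \sum_{j=2}^{d} |u_j|\, |\alpha_j|^n,
\]
and then exploit the ordering of the roots. Since the labeling $|\alpha_1| \geq |\alpha_2| \geq \cdots \geq |\alpha_d|$ fixed in Section~\ref{sec:background} forces $|\alpha_j| \leq |\alpha_2|$ for every $j \geq 2$, and since $x \mapsto x^n$ is monotone on $[0,\infty)$, we have $|\alpha_j|^n \leq |\alpha_2|^n$ for all $j \geq 2$ and all $n \geq 1$. Pulling the common factor $|\alpha_2|^n$ out gives
\[
\sum_{j=2}^{d} |u_j|\, |\alpha_j|^n \;\leq\; \left(\sum_{j=2}^{d} |u_j|\right) |\alpha_2|^n.
\]

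Finally, to promote the resulting non-strict inequality to the strict inequality in \eqref{ineq:lemma-1-inequality}, I would simply choose
\[
C_{\ref*{lemma-1-const}} \;=\; 1 + \sum_{j=2}^{d} |u_j|,
\]
which depends only on the sequence $\{U_n\}_{n\geq 0}$ through its closed-form coefficients and is therefore effectively computable. The hypothesis $n \geq 3$ is not actually used in the argument; it is inherited from the running assumptions of the main theorem and simply ensures we are in the regime of interest.

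There is no real obstacle here: the proof is a textbook application of the triangle inequality together with the root-ordering convention, so my main focus would just be on making the constant explicit and confirming that the non-dominant-root contribution is indeed controlled by $|\alpha_2|^n$ as claimed.
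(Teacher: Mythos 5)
Your argument is correct and matches the paper's own justification, which is exactly the same triangle-inequality-plus-root-ordering step, with the constant taken as $(k-1)u_{\max}$ (a $k$-versus-$d$ slip for $(d-1)u_{\max}$, comparable to your $\sum_{j=2}^{d}|u_j|$). Your extra ``$+1$'' to guarantee strictness is a harmless refinement the paper omits.
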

Since $|\alpha_2| \geq \ldots \geq |\alpha_k|$, we can bound the sum. Set $C_{\ref*{lemma-1-const}} = (k-1)u_{\max}$ where we define $u_{\max} := \max_{1\leq i\leq k}\{|u_i|\}$. \\
\begin{proposition} Under the same hypotheses as in Theorem \ref{thm:main-thm-generalized}, there exist computable constants {$N_1,\dots,N_k$} such that 
    $$n_1 - n_m \leq N_m (\log n_1)^{m-1}$$ 
for $2 \leq m \leq k$.

\end{proposition}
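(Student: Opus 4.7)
The plan is to prove the proposition by induction on $m$, with base case $m = 2$. At each step, the strategy is to isolate the $m$ largest exponential terms on the left-hand side of equation \eqref{eq:main-equation-generalized}, replace each $U_{n_j}$ by its principal part $u\alpha^{n_j}$ using Lemma \ref{lem:generalized-lemma-1}, move the prime product to the opposite side, divide through by the leading expression, and obtain a linear form in logarithms that admits both an elementary upper bound (from the remaining smaller terms) and a Matveev-type lower bound (from Theorem \ref{thm:matveev}). The inductive hypothesis feeds into the modified Matveev height through Lemma \ref{lem:mazumdar-lemma3.7}, keeping the lower bound polylogarithmic in $n_1$.

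For the base case, rewrite \eqref{eq:main-equation-generalized} as
$$\lambda_1 u \alpha^{n_1} - wp_1^{z_1}\cdots p_s^{z_s} = -\sum_{j=2}^{k}\lambda_j U_{n_j} - \lambda_1\bigl(U_{n_1} - u\alpha^{n_1}\bigr),$$
and bound the right-hand side in absolute value by a constant times $|\alpha|^{n_2} + |\alpha_2|^{n_1}$, using part (1) of Lemma \ref{lem:lemma-2} on the first sum and Lemma \ref{lem:generalized-lemma-1} on the second. Dividing through by $|\lambda_1 u \alpha^{n_1}|$ produces
$$|\Lambda_2| < c \cdot \max\bigl\{|\alpha|^{-(n_1 - n_2)},\;(|\alpha_2|/|\alpha|)^{n_1}\bigr\},$$
where $\Lambda_2$ is the linear form $\gamma_1\,|\alpha|^{-n_1}\,p_1^{z_1}\cdots p_s^{z_s} - 1$ with $\gamma_1 = |w|/(|\lambda_1||u|)$. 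Part (2) of Lemma \ref{lem:lemma-2} bounds each $z_i \leq (2\log|\alpha|/\log p_i)\,n_1$, so Matveev's parameter satisfies $B \leq Cn_1$; invoking Theorem \ref{thm:matveev} with $t = s + 2$ and constant heights for $\gamma_1, |\alpha|, p_1,\dots, p_s$ yields $\log|\Lambda_2| > -\kappa_2(1+\log n_1)$. Comparing the two bounds and splitting on which term achieves the maximum gives $n_1 - n_2 \leq N_2 \log n_1$ in the first case; in the second case one obtains an outright bound on $n_1$, which is absorbed into $N_2$.

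For the inductive step, assume the bound holds for all indices $j \leq m$ with $m < k$, and isolate the first $m$ exponentials:
$$u\sum_{j=1}^{m}\lambda_j \alpha^{n_j} - wp_1^{z_1}\cdots p_s^{z_s} = -\sum_{j=m+1}^{k}\lambda_j U_{n_j} - \sum_{j=1}^{m}\lambda_j\bigl(U_{n_j} - u\alpha^{n_j}\bigr),$$
whose right-hand side is bounded in absolute value by a constant times $|\alpha|^{n_{m+1}} + |\alpha_2|^{n_1}$. By Proposition \ref{prop:proposition-1}, the leading factor satisfies $\bigl|u\sum_{j=1}^{m}\lambda_j \alpha^{n_j}\bigr| \geq |u|\,C_{\ref*{ziegler-const-2}}^{(m)}\,|\alpha|^{n_1}$, so dividing yields
$$|\Lambda_{m+1}| < c \cdot \max\bigl\{|\alpha|^{-(n_1 - n_{m+1})},\;(|\alpha_2|/|\alpha|)^{n_1}\bigr\},$$
where $\Lambda_{m+1} = \gamma_m\,|\alpha|^{-n_1}\,p_1^{z_1}\cdots p_s^{z_s} - 1$ and $\gamma_m$ is the algebraic number from Lemma \ref{lem:mazumdar-lemma3.7}. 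The inductive hypothesis gives $n_1 - n_j \leq N_j (\log n_1)^{j-1}$ for $j \leq m$, so Lemma \ref{lem:mazumdar-lemma3.7} furnishes $A_3(m) \leq K_m(\log n_1)^{m-1}$ for an effective $K_m$. Theorem \ref{thm:matveev} then produces $\log|\Lambda_{m+1}| > -\Theta_m (1 + \log n_1)(\log n_1)^{m-1}$, and the same two-case comparison with the upper bound yields $n_1 - n_{m+1} \leq N_{m+1}(\log n_1)^m$, closing the induction.

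The principal obstacle is verifying $\Lambda_{m+1} \neq 0$ before invoking Matveev: the identity $\Lambda_{m+1} = 0$ would force $|u|\bigl|\sum_{j=1}^m \lambda_j \alpha^{n_j}\bigr| = |w|p_1^{z_1}\cdots p_s^{z_s}$ exactly, a condition that must be excluded by applying a nontrivial Galois conjugate of $\alpha$ and invoking the simplicity and non-degeneracy of $\{U_n\}$. A smaller bookkeeping point is the case in which the $(|\alpha_2|/|\alpha|)^{n_1}$ term dominates the upper bound: this case yields an outright bound on $n_1$, which is absorbed into $N_{m+1}$ because the assumption $n_1 \geq 3$ ensures $\log n_1 \geq \log 3 > 0$.
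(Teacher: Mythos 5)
Your overall route is the paper's: isolate the leading exponentials, bound the tail by a constant times $|\alpha|^{n_{m+1}}+|\alpha_2|^{n_1}$, divide by the leading sum using Proposition \ref{prop:proposition-1}, feed the induction hypothesis into the height $A_3$ via Lemma \ref{lem:mazumdar-lemma3.7}, bound $B$ by part (2) of Lemma \ref{lem:lemma-2}, and compare the resulting Matveev lower bound with the elementary upper bound. That part of the sketch is sound and matches the paper step for step (your explicit inclusion of the full tail $\sum_{j=m+1}^{k}\lambda_j U_{n_j}$ is, if anything, a cleaner bookkeeping of the same estimate).

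The genuine gap is the vanishing case, which you yourself flag as ``the principal obstacle'' and then do not resolve. You propose to \emph{exclude} $\Lambda_{m+1}=0$ by applying a nontrivial Galois conjugate of $\alpha$ and invoking simplicity and non-degeneracy, but in the generality of Theorem \ref{thm:main-thm-generalized} there is no such exclusion to be had: nothing in the hypotheses prevents $\alpha$, $u$ and the $\lambda_j$ from being rational (take, e.g., $U_n=2^n+1$, a simple non-degenerate order-two sequence with dominant root $\alpha=2$ and $u=1$), and then $\Lambda_{m+1}=0$ is an identity between rational numbers with no nontrivial conjugate of $\alpha$ available, so a conjugation argument has nothing to act on and the vanishing cannot be ruled out a priori. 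The case must be \emph{handled}, not excluded, and the paper does this as follows: $\Lambda_{m+1}=0$ gives the exact relation $|\alpha|^{n_1}p_1^{-z_1}\cdots p_s^{-z_s}=\gamma_m$; comparing absolute logarithmic heights, the left-hand side contributes $n_1h(|\alpha|)+\sum_j z_j\log p_j$, which by part (3) of Lemma \ref{lem:lemma-2} grows at least linearly in $n_1$, while $h(\gamma_m)$ is at most a constant plus a multiple of $(\log n_1)^{m-1}$ by the height estimate in Lemma \ref{lem:mazumdar-lemma3.7} combined with the induction hypothesis; Lemma \ref{lem:not_log} then bounds $n_1$ outright, which a fortiori bounds $n_1-n_{m+1}$. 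Without this (or an equivalent treatment) your induction does not close, precisely at the step you identified as the crux.
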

\begin{proof}

We begin by re-writing equation~\eqref{eq:main-equation-generalized} to collect the ``large'' terms on the left-hand side, and then bound those terms.
\begin{align}\label{eq:induct_lambda}
\sum_{j=1}^{m - 1} \left( u\lambda_j\alpha^{n_j} \right) - wp_1^{z_1} \cdots p_s^{z_s}
    &=  -\lambda_mU_{n_m} - \sum_{j=1}^{m - 1} \lambda_j\left( u_2\alpha_{2}^{n_j} +\ldots + u_{d}\alpha_{d}^{n_j} \right), \nonumber\\
\left\lvert \sum_{j=1}^{m - 1} \left( u\lambda_j\alpha^{n_j} \right) - wp_1^{z_1} \cdots p_s^{z_s} \right\rvert
    &=      \left\lvert \lambda_mU_{n_m} + \sum_{j=1}^{m - 1} \lambda_j\left( u_2\alpha_{2}^{n_j} +\ldots + u_{d}\alpha_{d}^{n_j} \right) \right\rvert, \nonumber \\
    &\leq   \left( \max_{1 \leq j \leq m}|\lambda_j| \right)|U_{n_m}| + \sum_{j=1}^{m - 1} |\lambda_j|\left( |u_2\alpha_{2}^{n_j}| +\ldots + |u_{d}\alpha_{d}^{n_j}|  \right), \nonumber\\
    &<     \left( \max_{1 \leq j \leq m}|\lambda_j| \right)\left(\left(|u| + C_1 \right)|\alpha|^{n_m} + |u_{\max}| \sum_{j=1}^{m - 1} \left( |\alpha_{2}^{n_j}| +\ldots + |\alpha_{d}^{n_j}| \right)\right).
\end{align}
Denote $\Lambda_m$ as 
$$\Lambda_m = 1 - |w|p_1^{z_1} \cdots p_s^{z_s} \left\lvert \sum_{j=1}^{m - 1} \left( u\lambda_j\alpha^{n_j} \right)\right\rvert^{-1}.$$

Now, we consider two cases. First, assume $|\alpha_2| < 1$. Then, recalling that $\alpha$ is the dominant root, from equation~\eqref{eq:induct_lambda} we have
\newconstant{diff-const-1}
\begin{align}\label{eq:approx_lambda}
\left\lvert \sum_{j=1}^{m - 1} \left( u\lambda_j\alpha^{n_j} \right) - wp_1^{z_1} \cdots p_s^{z_s} \right\rvert
    &<  C_{\ref*{diff-const-1}}^{(m)} |\alpha|^{n_m},
\end{align}
where
$$C_{\ref*{diff-const-1}}^{(m)} = \left( \max_{1 \leq j \leq m}|\lambda_j| \right) \left( |u| + C_1 + (d - 1)(m - 1)|u_{\max}|\right).$$ 
Next, we divide both sides of equation~\eqref{eq:approx_lambda} by $\left\lvert \sum_{j=1}^{m - 1} \left( u\lambda_j\alpha^{n_j} \right) \right\rvert$ and apply Proposition \ref{prop:proposition-1} to get the following upper-bound on $|\Lambda_m|$,
\begin{align*}
|\Lambda_m|
    &<  \frac{C_{\ref*{diff-const-1}}^{(m)} |\alpha|^{n_m}}{|u| \left\lvert \sum_{j=1}^{m - 1} \lambda_j\alpha^{n_j} \right\rvert} \\
    &< \frac{C_{\ref*{diff-const-1}}^{(m)}}{C_{\ref*{ziegler-const-2}}^{(m)}|u||\alpha|^{n_1 - n_m}}.
\end{align*}

Next, we assume $|\alpha_2| \geq 1$. Similar to before, we divide both sides of by $\left\lvert \sum_{j=1}^{m - 1} \left( u\lambda_j\alpha^{n_j} \right) \right\rvert$ and use that $\alpha$ is the dominant root to get an upper bound on $|\Lambda_m|$,
\newconstant{diff-const-2}
\begin{align*}
|\Lambda_m|
    &<  \frac{d|\alpha|^{n_m}}{|u| \left|\sum_{j=1}^{m-1} \lambda_j\alpha^{n_j}\right|} + \frac{\left( \max_{1 \leq j \leq m}|\lambda_j| \right) |u_{\max}| \sum_{j=1}^{m - 1} \left( |\alpha_{2}^{n_j}| +\ldots + |\alpha_{d}^{n_j}| \right)}{|u| \left|\sum_{j=1}^{m-1} \lambda_j\alpha^{n_j}\right|} \\
    &<  \frac{d|\alpha|^{n_m}}{C_{\ref*{ziegler-const-2}}^{(m)}|u||\alpha|^{n_1}} + \frac{(m - 1)(d - 1)\left( \max_{1 \leq j \leq m}|\lambda_j| \right)|u_{\max}||\alpha_2|^{n_1}}{C_{\ref*{ziegler-const-2}}^{(m)}|u||\alpha|^{n_1}} \\
    &<  \frac{d}{C_{\ref*{ziegler-const-2}}^{(m)}|u|} \left( \frac{|\alpha_2|}{|\alpha|} \right)^{n_1 - n_m} + \frac{(m - 1)(d - 1)\left( \max_{1 \leq j \leq m}|\lambda_j| \right)|u_{\max}|}{C_{\ref*{ziegler-const-2}}^{(m)}|u|} \left( \frac{|\alpha_2|}{|\alpha|} \right)^{n_1 - n_m} \\
    &=  C_{\ref*{diff-const-2}}^{(m)} \left( \frac{|\alpha_2|}{|\alpha|} \right)^{n_1 - n_m},
\end{align*}
where $C_{\ref*{diff-const-2}}^{(m)} = \left( d + (m - 1)(d - 1)\left( \max_{1 \leq j \leq m}|\lambda_j| \right)|u_{\max}|\right)/(C_{\ref*{ziegler-const-2}}^{(m)}|u|)$. Thus, for any $|\alpha_2|$, we have 
\newconstant{diff-const-3}
\begin{align}\label{eq:generalized-induction-step-upperbound}
|\Lambda_m|
    &< \frac{C_{\ref*{diff-const-3}}^{(m)}}{\min\left\{ \frac{|\alpha|}{|\alpha_2|}, |\alpha|\right\}^{n_1 - n_m}},
\end{align}
where $C_{\ref*{diff-const-3}}^{(m)} = \max\{\frac{C_{\ref*{diff-const-1}}^{(m)}}{C_{\ref*{ziegler-const-2}}^{(m)}|u|},C_{\ref*{diff-const-2}}^{(m)}\}$.

Since we've considered much of the computations needed for the induction in the previous section, we proceed showing the induction step (i.e. bounding the difference $|n_1 - n_m|$) as the base case follows a very similar calculation.

First, we apply Theorem \ref{thm:matveev} to obtain a lower bound of $\Lambda_m$. In order to do so, we require that $\Lambda_m \neq 0$. If it were the case that $\Lambda_m = 0$, then, we have
\begin{align}\label{eq:lambda_m equals zero}
|\alpha|^{n_1}p_1^{-z_1}\cdots p_s^{-z_s}
    &=  |w| |u|^{-1} |\lambda_1 + \lambda_2\alpha^{n_2 - n_1} +\ldots + \lambda_{m-1}\alpha^{n_{m-1} - n_1}|^{-1}.
\end{align}
Examining the heights of equation \ref{eq:lambda_m equals zero} and using Lemma \ref{lem:lemma-2} (iii), we have
\begin{align*}
h\left( |w| |u|^{-1} |\lambda_1 + \lambda_2\alpha^{n_2 - n_1} +\ldots + \lambda_{m-1}\alpha^{n_{m-1} - n_1}|^{-1} \right)
    &=  h\left( |\alpha|^{n_1} p_1^{-z_1}\cdots p_s^{-z_s} \right), \\
    &=  n_1 h(|\alpha|) + \sum_{j=1}^{s} z_j \log p_j, \\
    &>  n_1 (h(|\alpha|) + \log |\alpha|) + \log c_{\ref*{zieg-lem}}.
\end{align*}
Recalling the notation of $\gamma_{m-1}$ from Lemma \ref{lem:mazumdar-lemma3.7},
\begin{align*}
\gamma_{m-1}
    &=  |w| |u|^{-1} |\lambda_1 + \lambda_2\alpha^{n_2 - n_1} +\ldots + \lambda_{m-1}\alpha^{n_{m-1} - n_1}|^{-1}.
\end{align*}
Then, using the proof of Lemma \ref{lem:mazumdar-lemma3.7} and the induction hypothesis, we find the following upper-bound on the height of $\gamma_{m-1}$,
\begin{align*}
\begin{split}
h\left( \gamma_{m-1} \right)
    &\leq   \log |w| + h(|u|) + (m-1) \log \left( \max_{1 \leq j \leq m-1}|\lambda_j| \right) \\
    &\quad\quad + \left( \sum_{j=2}^{m-1} |n_j - n_1| \right)  h(|\alpha|) + \log (m - 1), \\
    &\leq   \log |w| + h(|u|) + (m-1) \log \left( \max_{1 \leq j \leq m-1}|\lambda_j| \right) \\
    &\quad\quad + \left( N_2\log n_1 + N_3 (\log n_1)^2 + \ldots + N_{m - 1} (\log n_1)^{m-2} \right)  h(|\alpha|) + \log (m-1), \\
    &\leq   \log |w| + h(|u|) + (m-1)  \log \left( \max_{1 \leq j \leq m-1}|\lambda_j| \right) \\
    &\quad\quad + \left( N_2 + N_3 + \ldots + N_{m - 1} \right) h(|\alpha|) (\log n_1)^{m-2} + \log (m - 1). \\
\end{split}
\end{align*}
Next, comparing inequalities  applying Lemma \ref{lem:not_log}, we have the following bound on $n_1$,
\begin{align*}
n_1
    &\leq   \max\left\{ 2^h(u^{1/(m-2)}+v^{1/(m-2)}\log ((m-2)^{m-2} v))^{m-2},2^{m-2} (u^{1/(m-2)}+2 e^2)^{m-2} \right\},
\end{align*}
where we define
\begin{align*}
u
    &=  \frac{\log |w| + h(|u|) + (m - 1)  \log \left( \max_{1 \leq j \leq m-1}|\lambda_j| \right) + \log (m - 1) - \log c_5}{h(|\alpha|) + \log |\alpha|}, \\
v
    &=  \frac{(N_2 + N_3 + \ldots + N_{m - 1}) h(|\alpha|)}{h(|\alpha|) + \log |\alpha|}.
\end{align*}
Thus, if $\Lambda_m = 0$, we may find an upper-bound on $n_1$.

Now, we proceed assuming $\Lambda_m \neq 0$. Let $D = [\Q(\alpha, \dots, \alpha_d) : \Q]$.
Let $\gamma_{1, i} = p_i$ and $b_{1, i} = z_i$, and let $\gamma_2 = \alpha$ and $b_2 = -n_1$. Similarly, we set $\gamma_3 = |w| |u|^{-1} |\lambda_1 + \lambda_2\alpha^{n_2 - n_1} +\ldots + \lambda_{m-1}\alpha^{n_{m-1} - n_1}|^{-1}$ and $b_3 = 1$. Since $h(p_i) = \log p_i$, we choose $A_{1, i} = D \log p_i$. We choose $A_2 = \max\{Dh(|\alpha|), \log |\alpha|, 0.16\}$ and $A_3 = A_3(m - 1)$ as defined in Lemma \ref{lem:mazumdar-lemma3.7}. Finally, we let $B = d_1n_1 \geq \max\{|z_1|, \dots, |z_s|, |n_1|, 1\}$, where $d_1 = \frac{2\log |\alpha|}{\log 2} n_1$. Next, we get a bound on $A_3$ by using Lemma \ref{lem:mazumdar-lemma3.7} and our induction hypothesis.
\begin{align}\label{ineq:generalized-induction-step-hypothesis}
    A_3     &=  DC_{\ref*{maz-lemma-const-1}}^{(m-1)} + D\left( \sum_{j=2}^{m - 1} |n_1 - n_j|\right) h(|\alpha|) \nonumber \\
            &<  DC_{\ref*{maz-lemma-const-1}}^{(m-1)} + D\left( \sum_{j=2}^{m-1} N_j (\log n_1)^{j - 1} \right) h(|\alpha|) \nonumber \\
            &=  DC_{\ref*{maz-lemma-const-1}}^{(m-1)} + D\left(N_2\log n_1 + N_3 (\log n_1)^2 + \ldots + N_{m - 1} (\log n_1)^{m-2}\right) h(|\alpha|) \nonumber \\
            &<  \left( DC_{\ref*{maz-lemma-const-1}}^{(m-1)} + D\left(N_2 + N_3 + \ldots + N_{m-1} \right) h(|\alpha|) \right) (\log n_1)^{m - 2}.
\end{align}
Then applying Theorem \ref{thm:matveev}, we have \newconstant{diff-const-4}
\begin{align*}
\log |\Lambda_m|    
    &>  -1.4 \cdot 30^{s+5}(s+2)^{4.5} D^2 (1 + \log D) (1 + \log d_1n_1) (D \log p_1) \cdots (D \log p_s) A_2 A_3, \\
    &>  -C_M^{(s + 2)} (1 + \log d_1) (\log n_1) (\log p_1) \cdots (\log p_s) A_2 A_3, \nonumber \\
    &>  -C_{\ref*{diff-const-4}}^{(m-1)} (\log n_1)^{m - 1}, \nonumber
\end{align*}
where we used inequality (\ref{ineq:generalized-induction-step-hypothesis}) and let 

$$C_{\ref*{diff-const-4}}^{(m-1)} = C_M^{(s + 2)} (1 + \log d_1) \left(\prod_{i=1}^s\log p_i\right) A_2  \left( DC_{\ref*{maz-lemma-const-1}}^{(m-1)} + D\left(\sum_{i=2}^{m-1} N_i\right) \cdot h(|\alpha|) \right).$$ 
Taking logarithms of inequality \eqref{eq:generalized-induction-step-upperbound}, we obtain
\begin{align*}
\log |\Lambda_m|  &<  \log C_{\ref*{diff-const-3}}^{(m)} - (n_1 - n_m) \log \min\left\{ \frac{|\alpha|}{|\alpha_2|}, |\alpha| \right\}.
\end{align*}
Thus, comparing the above inequalities, we have
\begin{align*}
-C_{\ref*{diff-const-4}}^{(m-1)} \cdot (\log n_1)^{m - 1}
    &<  \log C_{\ref*{diff-const-3}}^{(m)} - (n_1 - n_m) \log \min\left\{ \frac{|\alpha|}{|\alpha_2|}, |\alpha|\right\}, \\
n_1 - n_m
    &<  \frac{\log C_{\ref*{diff-const-3}}^{(m)} + C_{\ref*{diff-const-4}}^{(m-1)} \cdot (\log n_1)^{m - 1}}{\log \min\left\{ \frac{|\alpha|}{|\alpha_2|}, |\alpha|\right\}}, \\
    &<  \frac{\log C_{\ref*{diff-const-3}}^{(m)} + C_{\ref*{diff-const-4}}^{(m-1)}}{\log \min\left\{ \frac{|\alpha|}{|\alpha_2|}, |\alpha|\right\}} (\log n_1)^{m - 1}, \\
    &<  N_m (\log n_1)^{m - 1},
\end{align*}
where $N_m = \left(C_{\ref*{diff-const-4}}^{(m)} + \log C_{\ref*{diff-const-3}}^{(m)}\right)/\left(\log\min\left\{\frac{|\alpha|}{|\alpha_2|}, |\alpha|\right\}\right)$.
\end{proof}

\subsection{Bounding \texorpdfstring{$n_1$}{n1}}\label{sec:generalized-bounding-n1}
In this section, we find a bound on $n_1$ in terms of $n_m - n_1$ for $2 \leq m \leq k$. First, we begin by re-writing equation~\eqref{eq:main-equation-generalized} to collect all of the ``large'' terms on the left-hand side,
\begin{align*}
\lambda_1U_{n_1} +\ldots + \lambda_kU_{n_k}  
    &=  wp_1^{z_1}\cdots p_s^{z_s} \\
\sum_{i=1}^d u_i\lambda_1\alpha_i^{n_1} +\ldots + \sum_{i=1}^d u_i\lambda_k\alpha_i^{n_k}
    &=  wp_1^{z_1} \cdots p_s^{z_s} \\
\sum_{j=1}^k u\lambda_j\alpha^{n_j} - wp_1^{z_1} \cdots p_s^{z_s}
    &=  -\sum_{i=2}^d u_i\lambda_1\alpha_i^{n_1} - \ldots - \sum_{i=2}^d u_i\lambda_k\alpha_i^{n_k}.
\end{align*}
Taking absolute values, we obtain
\begin{align*}
\left\lvert u\alpha^{n_1} \sum_{j=1}^k \lambda_j\alpha^{n_j - n_1} - wp_1^{z_1} \cdots p_s^{z_s} \right\rvert
    &=  \left\lvert \sum_{i=2}^d u_i\lambda_1\alpha_i^{n_1} +\ldots + \sum_{i=2}^d u_i\lambda_k\alpha_i^{n_k} \right\rvert \\
    &=  \left\lvert \sum_{j=1}^k \lambda_j\left(u_2\alpha_2^{n_j} +\ldots + u_d\alpha_d^{n_j} \right) \right\rvert.
\end{align*}
Denote $\Lambda$ as follows,
$$\Lambda = 1 - |w|p_1^{z_1} \cdots p_s^{z_s} |u|^{-1}|\alpha|^{-n_1}|\lambda_1 + \lambda_2\alpha^{n_2 - n_1} +\ldots + \lambda_{k}\alpha^{n_{k} - n_1}|^{-1}.$$

Now, we find an upper bound for $\Lambda$ by considering two different cases. If $|\alpha_2| < 1$, then we have
\begin{align*}
\left\lvert \sum_{j=1}^k \lambda_j\left(u_2\alpha_2^{n_j} +\ldots + u_d\alpha_d^{n_j} \right) \right\rvert
    &<      \left\lvert \sum_{j=1}^k \lambda_j\left(u_2 +\ldots + u_d \right) \right\rvert \\
    &\leq   k(d - 1)(\max_{1 \leq j \leq k}|\lambda_j|)u_{\max}.
\end{align*}
Using our definition of $\Lambda$ and dividing the above equation by $\left\lvert \sum_{j=1}^k u\lambda_j\alpha^{n_j} \right\rvert$, along with Proposition \ref{prop:proposition-1}, gives \newconstant{n1-const}
\begin{align*}
|\Lambda|
    &<      \frac{k(d - 1)(\max_{1 \leq j \leq k}|\lambda_j|)u_{\max}}{|u| \left\lvert \sum_{j=1}^k \lambda_j\alpha^{n_j} \right\rvert} \\
    &\leq   \frac{k(\max_{1 \leq j \leq k}|\lambda_j|)u_{\max}}{C_{\ref*{ziegler-const-2}}^{(m)}|u| |\alpha|^{n_1}} \\
    &=      \frac{C_{\ref*{n1-const}}}{|\alpha|^{n_1}},
\end{align*}
where $C_{\ref*{n1-const}} = \left(k(d-1)(\max_{1 \leq j \leq k}|\lambda_j|)u_{\max}\right)/(C_{\ref*{ziegler-const-2}}^{(m)}|u|)$. 

If $|\alpha_2| \geq 1$, then by a similar process, 
\begin{align*}
|\Lambda|
    &<      \frac{\left\lvert \sum_{j=1}^k \lambda_j\left(u_2\alpha_2^{n_j} +\ldots + u_d\alpha_d^{n_j} \right) \right\rvert}{|u| \left\lvert \sum_{j=1}^k \lambda_j\alpha^{n_j} \right\rvert} \\
    &<      \frac{\left\lvert \sum_{j=1}^k u_2\lambda_j\alpha_2^{n_j} \right\rvert +\ldots + \left\lvert \sum_{j=1}^k u_d\lambda_j\alpha_d^{n_j} \right\rvert}{C_{\ref*{ziegler-const-2}}^{(m)}|u||\alpha|^{n_1}} \\
    &\leq   \frac{k\left( \max_{1 \leq j \leq k}|\lambda_j| \right)|u_2|}{C_{\ref*{ziegler-const-2}}^{(m)}|u|} \left( \frac{|\alpha_2|}{|\alpha|} \right)^{n_1} + \ldots + \frac{k\left( \max_{1 \leq j \leq m}|\lambda_j| \right)|u_d|}{C_{\ref*{ziegler-const-2}}^{(m)}|u|} \left( \frac{|\alpha_d|}{|\alpha|} \right)^{n_1} \\
    &<      \frac{k(d - 1)\left( \max_{1 \leq j \leq k}|\lambda_j| \right)u_{\max}}{C_{\ref*{ziegler-const-2}}^{(m)}|u|} \left( \frac{|\alpha_2|}{|\alpha|} \right)^{n_1} \\
    &=      C_{\ref*{n1-const}} \left( \frac{|\alpha_2|}{|\alpha|} \right)^{n_1}.
\end{align*}
Thus, for any $|\alpha_2|$, we have the following,
\begin{align}\label{eq:generalized-bounding-n1-upperb}
|\Lambda|
    &<  \frac{C_{\ref*{n1-const}}}{\min \left\{\frac{|\alpha|}{|\alpha_2|}, \alpha\right\}^{n_1}}.
\end{align}

We now use Theorem \ref{thm:matveev} to find a lower bound for $|\Lambda|$. As before, we first handle the $\Lambda = 0$ case. Similar to Section \ref{sec:generalized-bounding-difference}, we recall that $\gamma_k$ is defined as follows
\begin{align*}
\gamma_k
    &=  |w| |u|^{-1} |\lambda_1 + \lambda_2\alpha^{n_2 - n_1} +\ldots + \lambda_{k}\alpha^{n_{k} - n_1}|^{-1} 
\end{align*}
Next, we find an upper-bound on the height of $\gamma_k$ by a very similar process,
\begin{align*}
\begin{split}
h(\gamma_k)
    &\leq   \log |w| + h(|u|) + k  \log \left( \max_{1 \leq j \leq k}|\lambda_j| \right) + \log k \\
    &\quad\quad +  (N_2 + N_3 + \ldots + N_{k}) h(|\alpha|) (\log n_1)^{k-1}.
\end{split}
\end{align*}
Using Lemma \ref{lem:lemma-2}, we have the following upper-bound on $n_1$,
\begin{align*}
n_1
    &\leq   \max \left\{ 2^{k-1}(u^{1/(k-1)}+v^{1/(k-1)}\log ((k-1)^{k-1} v))^{k-1},2^{k-1}(u^{1/(k-1)}+2e^2)^{k-1} \right\},
\end{align*}
where we define
\begin{align*}
u
    &=  \frac{\log |w| + h(|u|) + k  \log \left( \max_{1 \leq j \leq k}|\lambda_j| \right) + \log k - \log c_5}{h(|\alpha|) + \log |\alpha|} \\
v
    &=  \frac{(N_2 + N_3 + \ldots + N_{k}) h(|\alpha|)}{h(|\alpha|) + \log |\alpha|}
\end{align*}

Now, we assume that $\Lambda \neq 0$. Let $\gamma_{1, i} = p_i$ and $b_{1, i} = z_i$. Similar to the previous section, we let $\gamma_2 = \alpha$, $b_2 = -n_1$, $\gamma_3 = |w| |u|^{-1}\left\lvert \lambda_1 + \lambda_2\alpha^{n_2 - n_1} + \ldots + \lambda_{k}\alpha^{n_{k} - n_1} \right\rvert^{-1}$, and $b_3 = 1$. Denote $D = [\Q(\alpha, \dots, \alpha_d) : \Q]$. We choose $A_{1, i} = D \log p_i$ for $1 \leq i \leq s$. Similarly, we let $A_2 = \max\{Dh(\alpha), \log |\alpha|, 0.16\}$. For $A_3$, we bound it as follows,
\begin{align*}
A_3     
    &=  DC_{\ref*{maz-lemma-const-1}}^{(k)} + D\left( \sum_{j=2}^{k} |n_1 - n_j|\right) h(|\alpha|) \\
    &<  DC_{\ref*{maz-lemma-const-1}}^{(k)} + D\left( \sum_{j=2}^{k} N_j (\log n_1)^{j - 1} \right) h(|\alpha|) \\
    &=  DC_{\ref*{maz-lemma-const-1}}^{(k)} + D\left(N_2\log n_1 + N_3 (\log n_1)^2 + \ldots + N_{k} (\log n_1)^{k-1}\right) h(|\alpha|) \\
    &<  \left( D C_{\ref*{maz-lemma-const-1}}^{(k)} + D\left(N_2 + N_3 + \ldots + N_{k} \right) h(|\alpha|) \right) (\log n_1)^{k - 1}.
\end{align*}
Thus,
\begin{align}\label{eq:generalized-bounding-n1-lowerb}
\log |\Lambda|
    &>  -1.4 \cdot 30^{s+5} (s+2)^{4.5} D^2  (1 + \log D) (1 + \log d_1n_1) (D \log p_1) \cdots (D \log p_s) A_2 A_3 \nonumber \\
    &>  -C_M(s + 2) (1 + \log d_1) (\log n_1) (\log p_1) \cdots (\log p_s) A_2 A_3 \nonumber \\
    &>  -C_{\ref*{diff-const-4}}^{(k)} (\log n_1)^{k}.
\end{align}
Taking logarithms of inequality \eqref{eq:generalized-bounding-n1-upperb},
\begin{align}\label{eq:generalized-bounding-n1-upperb-log}
\log |\Lambda|  &<  \log C_{\ref*{n1-const}} - n_1 \log \min\left\{ \frac{|\alpha|}{|\alpha_2|}, |\alpha| \right\}.
\end{align}
Then, comparing inequalities \eqref{eq:generalized-bounding-n1-lowerb} and \eqref{eq:generalized-bounding-n1-upperb-log} and recalling that $n_1 \geq 3$, we obtain 
\begin{align*}
-C_{\ref*{diff-const-4}}^{(k)} (\log n_1)^{k}    
    &<  \log C_{\ref*{n1-const}} - n_1 \log \min\left\{ \frac{|\alpha|}{|\alpha_2|}, |\alpha| \right\}
\end{align*}
and thus
\begin{align*}
n_1
    &<  N_{\max} (\log n_1)^{k}
\end{align*} where $N_{\max} = \left(\log C_{\ref*{n1-const}} + C_{\ref*{diff-const-4}}^{(k)}\right)/\left(\log\min\left\{\frac{|\alpha|}{|\alpha_2|},|\alpha|\right\}\right)$. \\

Finally, using Lemma \ref{lem:not_log}, we have
\begin{align*}
n_1     &<  2^{k} \max\left\{N_{\max} \left( \log \left(k^{k} N_{\max} \right) \right)^{k}, (2e^2)^{k}\right\}.
\end{align*}
Since $n_1 > \ldots > n_k$, and we can bound each $z_i$ in terms of $n_1$ by Lemma \ref{lem:lemma-2}, this concludes the proof of Theorem \ref{thm:main-thm-generalized}.

\newpage

\section{Acknowledgements}
The authors are grateful for the support and funding received from SMALL REU 2021 and from  NSF Grant DMS1947438. They would also like to thank Volker Ziegler and Ingrid Vukusic for their many helpful comments on the paper.

\section*{Appendix}\label{sec:appendix-constants}

\begin{table}[h!]
\caption {Constants} \label{tab:constants} 
\renewcommand{\arraystretch}{1.5}
\begin{center}
\begin{tabular}{|l|l|}
\hline
      \multicolumn{2}{|l|}{\textbf{Theorem~\ref{thm:matveev} (Matveev's theorem)}} \tabularnewline
      \hline $C_M(x)$ & $1.4 \cdot 30^{x + 2}  x^{4.5} D^{x+2} (1 + \log D)$ \tabularnewline
      \hline 
      \multicolumn{2}{|l|}{\textbf{Lemma \ref{lem:mazumdar-lemma3.7}}} \tabularnewline
      \hline $C_{\ref*{maz-lemma-const-1}}^{(m)}$ & $\log |w| + h(|u|) + m \log (\max_{1 \leq j \leq m}|\lambda_j|) + \log m$
      \tabularnewline
      & $\quad+ \max\{\log |w| - \log |u| - \log C_{\ref*{ziegler-const-2}}^{(m)},\, \log |u| + \log (\max_{1 \leq j \leq m}|\lambda_j|) + \log m - \log |w|\}$
      \tabularnewline
      \hline
      \multicolumn{2}{|l|}{\textbf{Lemma~\ref{lem:generalized-lemma-1}}} 
      \tabularnewline
      \hline $C_{\ref*{lemma-1-const}}$ & $(k-1)u_{\max}$, where $u_{\max}=\max_{1\leq i\leq k}|u_i|$
      \tabularnewline
      \hline
      \multicolumn{2}{|l|}{\textbf{Induction Argument Bounding $n_1-n_m$}} \tabularnewline
      \hline
      $C_{\ref*{diff-const-1}}^{(m)}$ & $\left( \max_{1 \leq j \leq m}|\lambda_j| \right) \left( |u| + C_1 + (d - 1)(m - 1)|u_{\max}|\right)$ 
      \tabularnewline
      \hline
      $C_{\ref*{diff-const-2}}^{(m)}$ & $\left( d + (m - 1)(d - 1)\left( \max_{1 \leq j \leq m}|\lambda_j| \right)|u_{\max}|\right)/(C_{\ref*{ziegler-const-2}}^{(m)}|u|)$ 
      \tabularnewline
      \hline
      $C_{\ref*{diff-const-3}}^{(m)}$ & $\max\{\frac{C_{\ref*{diff-const-1}}^{(m)}}{C_{\ref*{ziegler-const-2}}|u|},C_{\ref*{diff-const-2}}^{(m)}\}$
     \tabularnewline
     \hline
     $C_{\ref*{diff-const-4}}^{(m-1)}$ & $C_M^{(s + 2)} (1 + \log d_1) (\prod_{i=1}^s\log p_i) A_2  ( DC_{\ref*{maz-lemma-const-1}}^{(m-1)} + D(\sum_{i=2}^{m-1} N_i) \cdot h(|\alpha|) )$,
     \tabularnewline
     & $\quad$ where $A_2=\max\{Dh(|\alpha)|,\log|\alpha|,0.16\}$
     \tabularnewline
     \hline
     $N_m$ & $(C_{\ref*{diff-const-4}}^{(m)} + \log C_{\ref*{diff-const-3}}^{(m)})/(\log \min \{\frac{|\alpha|}{|\alpha_2|}, |\alpha|\})$
     \tabularnewline
     \hline
     \multicolumn{2}{|l|}{\textbf{Bounding $n_1$}} \tabularnewline
   \hline
   $C_{\ref*{n1-const}}$ & $(k(d - 1)(\max_{1 \leq j \leq k}|\lambda_j|)u_{\max})/(C_{\ref*{ziegler-const-2}}^{(m)}|u|)$ 
   \tabularnewline
      \hline
      $N_{\max}$ & $(C_{\ref*{n1-const}} + C_{\ref*{diff-const-4}}^{(k)})/(\log \min \{ \frac{|\alpha|}{|\alpha_2|}, |\alpha| \})$
      \tabularnewline
      \hline
\end{tabular}
\end{center}
\end{table}

\bigskip

\bibliographystyle{acm}
\bibliography{main_arxiv}

\end{document}